\newcommand{\R}{\mathbb{R}}
\newcommand{\A}{\mathsf{A}}
\newcommand{\threebar}[1]{{\left\vert\kern-0.25ex\left\vert\kern-0.25ex\left\vert #1 
    \right\vert\kern-0.25ex\right\vert\kern-0.25ex\right\vert}}
\newcommand{\B}{\mathsf{B}}
\newtheorem{question}{Question}
\newtheorem{lemma}{Lemma}
\newtheorem{proposition}[lemma]{Proposition}
\newtheorem{theorem}{Theorem}
\DeclareMathOperator{\tr}{tr}
\begin{document}
\title[Periodic stability does not imply global stability]{Stability with respect to periodic switching laws does not imply global stability under arbitrary switching}
\author{Ian D. Morris}
%\thanks{This work was supported in part by the Leverhulme Trust under Grant RPG-2024-380.}
%\thanks{Ian D. Morris is with the School of Mathematical Sciences, Queen Mary University of London, London E1 4NS, U.K. (e-mail: i.morris@qmul.ac.uk).}
%}
\maketitle

\begin{abstract}
%We investigate sufficient conditions for the stability of linear switched systems under arbitrary switching. 
R. Shorten, F. Wirth, O. Mason, K. Wulff and C. King have asked whether a linear switched system is guaranteed to be globally uniformly stable under arbitrary switching if it is known that every trajectory induced by a periodic switching law converges exponentially to the origin.  Positive answers to this question have previously been announced for linear switched systems of order two and three. We answer this question negatively in all higher orders by constructing a fourth-order linear switched system with four switching states which is not uniformly exponentially stable but which has the property that every trajectory defined by a periodic switching law converges exponentially to the origin. %This construction is illustrated with an explicit example.
We argue informally that positive linear systems with this combination of properties are likely to exist in sufficiently high dimensions.

\bigskip

Keywords: arbitrary switching, global stability, linear switched systems, switched systems.
\end{abstract}

\section{Introduction}

%{\blue{Not clear how you're supposed to write the proof environment tbh}}
%{\blue{This is now in actually quite passable shape and I am aware of no errors. Note to self: 6 page ``standard'' max for technical notes, 8 page limit with extra payment.}}

A linear switched system is a dynamical system which consists of a family of linear subsystems together with a \emph{switching law} whose role is to determine which of the linear subsystems will govern the system's behaviour at each future time. Such systems have been studied extensively in the last few decades based on their relevance to phenomena including mechanical systems, power systems, traffic control and others (for references see for example \cite{LiAn09}) and are discussed in books and surveys including \cite{Li03,ChMaSi25,ShWiMaWuKi07,LiAn09,SuGe11}. This note concerns the problem of finding sufficient conditions for the stability of the origin for a general linear switched system. %(a topic of active current research, see for example \cite{Ha23,MaChSi23})

If $\A=\{A_0,\ldots,A_{N-1}\}$ is a set of real $d \times d$ matrices then we will say that a trajectory of the linear switched system defined by $\A$ is any solution $x(t)$ to a non-autonomous ordinary differential equation of the form
\begin{equation}\label{eq:dimple}\dot{x}(t) = A(t)x(t)\end{equation}
where $A(t)$, which we call the \emph{switching law}, is a function taking values in $\A$. The matrices $A_0,\ldots,A_{N-1}$ themselves will be referred to as \emph{switching states}. If more generally $A(t)$ is allowed to take values in the convex hull of the set of switching states then $x(t)$ is called a trajectory of the \emph{relaxed linear switched system} defined by $\A$. This note is concerned  primarily with  the relaxed version of the equation \eqref{eq:dimple} in the context of an arbitrary switching law $A(t)$, but has consequences for the non-relaxed system in the case of piecewise constant switching laws. 

The linear switched system defined by $\A$ is conventionally called \emph{globally uniformly asymptotically stable} or GUAS if its trajectories converge uniformly to the origin at a uniform rate in the following sense: there exists a class $\mathcal{KL}$ function $\beta \colon [0,\infty) \times [0,\infty) \to [0,\infty)$ such that $\|x(t)\|\leq \beta(\|x(0)\|,t)$ for all $t\geq 0$, for every trajectory $x(t)$, and for all $\A$-valued switching laws $A(t)$. For linear switched systems the function $\beta$ can without loss of generality be taken to be an exponential function $\beta(\|x(0)\|,t)\equiv Ke^{-\kappa t}$ --  see for example \cite{Li03,ChMaSi25} for a proof -- and so in our context GUAS implies uniform exponential stability of the origin.  The linear switched system defined by $\A$ is also conventionally called \emph{Lyapunov stable} if there exists $K>0$ depending only on $\A$ such that $\|x(t)\| \leq K \|x(0)\|$ holds for all $t \geq0$, for all trajectories $x(t)$ and for all $\A$-valued switching laws $A(t)$. It is known that the system \eqref{eq:dimple} is GUAS if and only if the corresponding relaxed linear switched system is GUAS, and by considering the case of constant switching laws this implies in particular that if $\A$ is GUAS then every matrix in the convex hull of $\A$ must be Hurwitz: see for example \cite[Remark 1.19]{ChMaSi25}. In what follows we will by default consider the relaxed version of the system \eqref{eq:dimple}.

%{\blue{deleted stuff about MUSLs, but maybe put it back in some form actually}}

It is a problem of fundamental interest to find general conditions which guarantee that a given system is GUAS. It is necessary for GUAS that every element of the convex hull of $\A$ should be a Hurwitz matrix -- or equivalently, that every constant switching law $A(t)$ with values in the convex hull should give rise only to exponentially stable trajectories -- but it is also known that this condition does not itself imply GUAS. More precisely, certain linear switched systems in two dimensions have the property that every constant switching law gives rise only to exponentially stable trajectories, yet certain \emph{periodic} switching laws give rise to unstable trajectories. Examples of such systems have been investigated for example in  \cite{BaBoMa09,Bo02,MaLa03,Mu23,PrMu24,PyRa96}, where second-order systems were studied, and higher-dimensional systems with similar properties have been investigated in \cite{BoMo26,ChGaMa15,FaMaCh09,GuShMa07}. Notably, in the second-order case it is possible to demonstrate that a system is stable if and only if every constant switching law and every periodic switching law gives rise only to exponentially decaying trajectories. The stability analysis of second-order switched systems under arbitrary switching is thus reduced to the much simpler problem of testing whether or not the system is stable for a certain explicit class of  ``most unstable switching laws'' all of which are either constant, or periodic with a simple and explicit structure. 

It is natural to ask whether the same result holds for higher order systems. The following question was posed by R. Shorten \emph{et al.} in the survey article \cite{ShWiMaWuKi07}:

\begin{question}\label{qu:that-question}
Let $A_0, \ldots, A_{N-1}$ be $d\times d$ matrices. Suppose that every matrix in the convex hull of $\{A_0,\ldots,A_{N-1}\}$ is Hurwitz, and suppose additionally that every trajectory of \eqref{eq:dimple} which arises from a periodic switching law converges to the origin. Does it follow that the linear switched system defined by $\{A_0,\dots,A_{N-1}\}$ is GUAS?
\end{question}

In dimension two a positive answer to this question follows from works such as \cite{BaBoMa09,Bo02,MaLa03,Mu23,PrMu24,PyRa96} as mentioned above. E.S. Pyatnitskiy and L.B. Rapoport also announced a positive answer to this question in the three-dimensional case (see \cite[\S{X}]{PyRa96}) but did not give a detailed proof. A complete, positive resolution of Question \ref{qu:that-question} for three-dimensional systems was recently announced by the author and J. Bochi in the separate preprint \cite{BoMo26}. 
The purpose of this note is to prove that the answer to Question \ref{qu:that-question} is \emph{negative} in the case $N=d=4$, which trivially implies a negative result for all higher dimensions $d$. To do this we will construct a linear switched system whose most unstable switching laws are \emph{quasi-periodic}, being in a certain sense the product of two periodic motions which resonate with one another at an irrational frequency.

A related question addressed by L. Gurvits, R. Shorten and O. Mason \cite{GuShMa07}, also posed independently by D. Angeli, asked whether every \emph{positive} linear switched system $\{A_0,\ldots,A_{N-1}\}$ with the property that every matrix in the convex hull of $\{A_0,\ldots,A_{N-1}\}$ is Hurwitz must be GUAS. Here the system is called \emph{positive} if all of the matrices $A_0,\ldots,A_{N-1}$ are Metzler matrices, or equivalently if for every trajectory $x(t)$ which begins at a non-negative vector $x(0)$, the vector $x(t)$ is non-negative for all $t \geq 0$. This question was resolved negatively by Gurvits \emph{et al} in \cite{GuShMa07} in the case where the dimension is sufficiently large, and a counterexample in dimension $3$ was later obtained by Fainshil, Margaliot and Chigansky in \cite{FaMaCh09}. In view of this question it is natural to ask whether counterexamples to Question \ref{qu:that-question} may be constructed which have the property of being positive. While we are not able to resolve this question definitively, we believe that such counterexamples should exist. This matter will be discussed further in the conclusions.

The result which we demonstrate in this note is the following negative answer to Question \ref{qu:that-question}:
\begin{theorem}\label{th:headline}
There exists a set $\A$ of four $4\times 4$ real matrices with the property that every solution of \eqref{eq:dimple} corresponding to a periodic Lebesgue measurable switching law $A(t)$ converges exponentially to zero, but such that the linear switched system defined by $\A$ is not GUAS.
\end{theorem}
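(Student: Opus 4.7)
The plan is to construct $\A = \{A_0, A_1, A_2, A_3\}$ explicitly so that the extremal (most unstable) trajectories of the relaxed switched system are quasi-periodic and cannot be matched by any periodic switching law. The natural framework identifies $\R^4$ with $\mathbb{C}^2$ and exploits the Hopf-type foliation of $\mathbb{C}^2$ by the invariant tori $\{(z_1,z_2) : |z_1|=c_1,\ |z_2|=c_2\}$. I would arrange matters so that, after projectivising, the maximal invariant set on which the system exhibits neutral growth is exactly one such two-torus $\Sigma$, and so that the induced dynamics on $\Sigma$ under an optimal relaxed switching rule $A_\star$ is conjugate to a rigid rotation by a pair of angles $(\alpha,\beta)$ with irrational ratio $\alpha/\beta$. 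Since an irrational rotation has no periodic orbits, a periodic switching law can never remain on $\Sigma$, and this is the geometric obstruction that should force every periodic trajectory to decay strictly.

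Concretely, each $A_i$ would have a $2\times 2$ complex block structure on $\mathbb{C}^2$, acting by rotations in the two complex directions by angles $(\alpha_i,\beta_i)$ together with a weak off-diagonal coupling that preserves the torus foliation but violates simultaneous block-diagonalisation. This coupling is essential: if the matrices were simultaneously block-diagonal over $\mathbb{C}$, the system would decouple into two independent $2$-dimensional subsystems, and the positive answer to Question~\ref{qu:that-question} in dimension two would force GUAS. The parameters would be tuned so that a specific $A_\star \in \operatorname{conv}(\A)$ has neutral radial growth on the equator torus while rotating it at irrationally related rates, and so that every $A \in \operatorname{conv}(\A)$ contracts every direction transverse to that equator torus at a uniformly positive rate. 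The failure of GUAS would then follow immediately from the constant switching law $A(t) \equiv A_\star$, whose trajectories starting on $\Sigma$ remain bounded.

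The substantive content is then to verify that every periodic measurable switching law is exponentially contracting. For this I would introduce a Barabanov-type extremal semi-norm $v$ for the switched semigroup, along which $v(x(t))$ is non-increasing on every trajectory and strictly decreasing unless the projective direction of $x(t)$ lies on $\Sigma$ and the switching agrees with $A_\star$. For a periodic switching law of period $T$, the top Lyapunov exponent is $T^{-1}\log\rho(\Phi(T,0))$ where $\Phi(T,0)$ is the monodromy; any invariant direction of $\Phi(T,0)$ lying on $\Sigma$ would correspond to a $T$-periodic orbit of the induced irrational rotation, which does not exist. Consequently every invariant direction of every monodromy lies off $\Sigma$, and strict decay of $v$ over one period forces a strictly negative top Lyapunov exponent. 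The main obstacle I anticipate is promoting this strict negativity to a uniform exponential decay bound across all periodic laws of all periods, since this space is not compact. Handling it requires a dynamical-rigidity step: the extremal invariant measure of the projectivised cocycle must be unique and supported on $\Sigma$ with the irrational-rotation dynamics, so that a hypothetical sequence of periodic laws whose Lyapunov exponents tend to zero would subconverge to a non-periodic extremal law, contradicting the periodicity of each member of the sequence. Combined with upper semi-continuity of the top Lyapunov exponent, this rigidity supplies the uniform spectral gap needed to conclude.
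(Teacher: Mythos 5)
There is a fatal structural flaw in your construction: your witness for the failure of GUAS is the \emph{constant} relaxed switching law $A(t)\equiv A_\star$ with $A_\star\in\operatorname{conv}(\A)$ having neutral growth on the torus $\Sigma$. A constant law is periodic, and the theorem (read with the paper's standing convention that \eqref{eq:dimple} is taken in the relaxed, convex-hull-valued sense, which is exactly how the precise version, Theorem \ref{th:not-periodic}\eqref{it:themb}, is stated) requires \emph{every} periodic measurable law with values in the convex hull to produce exponentially decaying trajectories. Your own non-GUAS witness therefore violates the first half of the statement you are trying to prove. Equivalently: your convex hull contains a non-Hurwitz matrix, whereas the whole point of the counterexample is to defeat Question \ref{qu:that-question}, whose hypotheses include that every matrix in the convex hull is Hurwitz; an example in which some convex combination is already marginally stable is not a counterexample to that question, and even under a strict ($\A$-valued) reading of ``periodic law'' you would still have to rule out fast-switching periodic laws that average to $A_\star$, which your Barabanov-norm argument does not do (such laws can chatter arbitrarily close to the neutral behaviour of $A_\star$, so strict decay of the extremal norm over a period is not guaranteed by your dichotomy ``either off $\Sigma$ or agreeing with $A_\star$''). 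The missing idea is that the non-decaying law must be genuinely \emph{quasi-periodic} while every convex combination remains Hurwitz; the paper achieves this by taking a planar pair $A_0,A_1$ whose worst-case behaviour is realised by a \emph{unique} periodic bang-bang law of a fixed period $T$ (Theorem \ref{th:example}), and forming Kronecker sums $A_i\otimes I+\alpha\, I\otimes A_j$ with $\alpha$ irrational, so that trajectories split as tensor products of two planar trajectories, one run at speed $\alpha$. A periodic $4$-dimensional law then induces periodic (or constant) laws in both planar factors, and a non-decaying trajectory would force the period to be simultaneously an integer multiple of $T$ and of $T/\alpha$, which is impossible; the non-GUAS witness is instead the quasi-periodic law built from $u_0(t)$ and $u_0(\alpha t)$.

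A secondary point: your anticipated ``main obstacle'' --- upgrading per-law decay to a decay rate uniform over all periodic laws --- is not needed for the statement, and in fact no such uniform rate can exist for a correct example: the bad quasi-periodic law is approximated by periodic laws whose monodromy spectral radii tend to $1$. For a fixed periodic law of period $\tau$, exponential decay is just the assertion $\rho(R(\tau))<1$ for its monodromy $R(\tau)$, which is exactly how the paper argues (via Theorem \ref{th:example}\eqref{it:exp}); no compactness or rigidity argument over the space of periodic laws is required.
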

Theorem \ref{th:headline} will be stated in a more precise form in the following section, but admits a relatively simple informal description as follows. As mentioned earlier, it has been shown that there exist pairs of $2\times 2$ real matrices $A_0, A_1$ which admit periodic most-unstable switching laws. By extending the arguments used in the works mentioned earlier, one may show additionally that for these pairs of matrices, for each initial vector there exists a \emph{unique} periodic switching law giving rise to a periodic solution of \eqref{eq:dimple}. The period of this switching law is moreover independent of the initial state vector. Call this period  $T$, say. Combining two copies of such a system using a tensor product construction, one may define a system of four $4\times 4$ matrices $B_0, B_1, B_2, B_3$ whose trajectories can be reduced to tensor products of pairs of independent trajectories of the aforementioned two-dimensional factor system $A_0, A_1$, with the second of the two independent trajectories moreover being rescaled in time by an irrational factor $\alpha$. It is then impossible for any periodic switching law to simultaneously result in non-decaying trajectories of both of the two-dimensional  factor systems, because in order to yield a non-decaying trajectory of the first  factor system its period must be an integer multiple of $T$, but in order to also have the same effect on the second  factor system its period must also be an integer multiple of $T/\alpha$, which is impossible by the irrationality of $\alpha$. On the other hand by a suitable \emph{quasi-periodic} control input one may cause both factor systems to repeat themselves periodically on rationally independent periods, resulting in a non-decaying trajectory and proving that the system is not GUAS. We will render this sketch argument into an explicit and rigorous form in Theorems \ref{th:example} and \ref{th:not-periodic} below.

\section{Mathematical background and statements of theorems}

Henceforth the switching law $A(t)$ will be assumed to be a function from the non-negative real line $[0,\infty)$ to the convex hull of the set of matrices $\{A_0,\ldots,A_{N-1}\}$ which is \emph{Lebesgue measurable}, which is the most general hypothesis under which the equation \eqref{eq:dimple}  makes sense. In this context the Carath\'eodory existence theorem (see for example \cite[Appendix C]{So98}) implies that for every switching law $A(t)$ and initial condition $x(0)$ there exists a unique absolutely continuous function $x(t)$ which satisfies \eqref{eq:dimple} for Lebesgue a.e. $t \geq 0$ and has initial state $x(0)$, and solutions to equations such as \eqref{eq:dimple} will always be understood in this sense. While the hypothesis that $A(t)$ is Lebesgue measurable is more convenient when giving proofs, the conclusions of our results will remain valid if it is always assumed instead that $A(t)$ is a piecewise constant function taking values in $\{A_0,\ldots,A_{N-1}\}$. In the particular case $N=2$ one may of course write $A(t)\equiv (1-u(t))A_0 + u(t)A_1$ where $u \colon [0,\infty) \to [0,1]$ is measurable, and in the case $N=2$ we will prefer to write the switching law $A(t)$ in this form. Abusing terminology very slightly, we will refer to the function $u$ as the switching law in this context. Throughout the rest of this work we use the notation $M_d(\R)$ to denote the set of all $d\times d$ real matrices.

Before stating our main result we require the following theorem which captures some essential properties of the two-dimensional case:
\begin{theorem}\label{th:example}
There exists a pair of matrices $A_0, A_1 \in M_2(\R)$ with the following properties:
\begin{enumerate}[(i)]
\item\label{it:hurwitz}
The matrices $I$, $A_0$ and $A_1$ are linearly independent, and for every $\gamma \in [0,1]$ the matrix $(1-\gamma)A_0 + \gamma A_1$ is Hurwitz.
\item\label{it:lyapunov}
There exists $C>0$ such that for every Lebesgue measurable function $u \colon [0,\infty) \to [0,1]$, every absolutely continuous solution $x(t)$ to the differential equation
\begin{equation}\label{eq:that-equation}\dot{x}(t)=\left((1-u(t))A_0 + u(t)A_1\right)x(t),\end{equation}
satisfies $\|x(t)\|\leq C\|x(0)\|$ for all $t \geq 0$.
\item\label{it:unique}
For every nonzero $w \in \R^2$ there exists a Lebesgue measurable function $u \colon [0,\infty) \to [0,1]$ such that the solution to \eqref{eq:that-equation} with initial condition $x(0)=w$ 
%\begin{equation}\label{eq:dont-get-me-started}x(0)=w,\end{equation}
is periodic. The function $u$ is unique up to Lebesgue measure zero, is periodic with period $T>0$ not depending on $w$, is piecewise constant, and admits the following description for a certain constant $T_0 \in (0,T)$. Let  $u_0 \colon [0,\infty) \to [0,1]$ be the function which takes the value $0$ on $[0,T_0)$, then takes the value $1$ on the following interval $[T_0,T)$, and repeats periodically with period $T$. Then $u(t)=u_0(t+t_0)$ a.e. for some real number $t_0 \in [0,T)$ which depends only on $w$.
\item\label{it:exp}
If $u \colon [0,\infty) \to [0,1]$ is a periodic Lebesgue measurable function then either every solution to \eqref{eq:that-equation} converges exponentially to the origin, or there exists a non-constant periodic solution to \eqref{eq:that-equation}. In the latter case we have $u(t)=u_0(t+t_0)$ a.e. for some real number $t_0 \in [0,T)$ and in particular $u$ has period $T$. %equal to one of the periodic switching laws described in \eqref{it:unique} and in particular has period $T$. % In the latter case $u$ has period $T$  and has the form described in \eqref{it:unique} above.
\end{enumerate}
\end{theorem}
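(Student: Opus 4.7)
The plan is to exhibit an explicit pair $A_0, A_1 \in M_2(\R)$ and verify each property by combining direct computation with the extremal/Barabanov norm theory for two-dimensional switched systems. A natural template is to take both matrices with a common negative trace and complex conjugate eigenvalues whose eigenvectors are rotated relative to one another, for example
\[A_0=\begin{pmatrix}-a & -\beta \\ \beta^{-1} & -a\end{pmatrix}, \qquad A_1=\begin{pmatrix}-a & -\beta^{-1} \\ \beta & -a\end{pmatrix},\]
with $a>0$ and $\beta>1$ to be fixed later. Property (i) is immediate: linear independence of $I, A_0, A_1$ follows from inspection, and for $\gamma \in [0,1]$ the convex combination has trace $-2a<0$ and strictly positive determinant, hence is Hurwitz.

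To prove (iii) I would invoke the Pontryagin maximum principle in its switched-system form: the most unstable control maximises $\langle p(t), A_i x(t)\rangle$ over $i \in \{0,1\}$, where $p(t)$ is the adjoint covector. In two dimensions the switching function $\langle p(t),(A_1-A_0)x(t)\rangle$ can be analyzed explicitly and has only finitely many sign changes per period, forcing the bang-bang structure. I would then choose $(a,\beta)$ so that an extremal trajectory from some $w_0\ne 0$ closes up after exactly one $A_0$-arc of duration $T_0$ followed by one $A_1$-arc of duration $T-T_0$; this fixes the period $T$. By rotational symmetry and time-translation invariance of \eqref{eq:that-equation}, every other nonzero $w$ lies on a periodic extremal orbit obtained by shifting $u_0$ by some $t_0 \in [0,T)$, which delivers the uniqueness statement.

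With this periodic extremal orbit in hand, let $K\subset\R^2$ be the closed curve traced by one period, symmetrised through the origin; provided $K$ is convex, its Minkowski functional is a Barabanov norm $\threebar{\cdot}$ invariant under the relaxed switched flow, and property (ii) follows from norm equivalence $c\|x\|\le \threebar{x}\le C\|x\|$. For (iv), any non-decaying periodic solution must achieve maximal Lyapunov exponent $0$ along the switched system, hence must be extremal; by (iii) its control is $u_0(\cdot+t_0)$ a.e., with period $T$. Every other periodic $u$ is non-extremal, so the Lyapunov exponent along it is strictly negative and the corresponding solutions decay exponentially.

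The main obstacle is the closure condition underlying (iii): one must show that parameters $(a,\beta,T_0,T)$ exist for which $e^{(T-T_0)A_1}e^{T_0 A_0}$ admits $+1$ as an eigenvalue (so that the bang-bang orbit closes), and that the resulting curve $K$ is convex (so that it is the unit ball of a norm). Existence can be obtained either by a continuity/intermediate-value argument in one parameter (letting $a\to 0$ degenerates the system into a rotation-only model in which closure is explicit, while large $a$ makes every orbit contract) or by importing a concrete example from \cite{Bo02,PyRa96,BaBoMa09}; convexity of $K$ is then verified from the fact that each $A_i$-flow is a stable spiral whose arcs turn monotonically in the same angular sense. All remaining ingredients, namely uniqueness in (iii) and the dichotomy in (iv), are standard once an extremal periodic orbit has been produced and the Barabanov norm constructed.
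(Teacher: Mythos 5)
Your outline has the right shape (a bang--bang closed orbit, a norm built from it, a dichotomy for periodic laws), but three load-bearing steps are asserted rather than proved, and each is a genuine gap. First, property (ii): the Minkowski functional of the symmetrised closed curve $K$ is \emph{not} automatically non-increasing along all trajectories of the relaxed system just because $K$ is convex. You need the stronger fact that at every point $x$ of $K$ \emph{both} vector fields $A_0x$ and $A_1x$ (hence every convex combination) point weakly into the region bounded by $K$; along an $A_0$-arc of the orbit, $A_0x$ is tangent by construction, but nothing in your argument controls the direction of $A_1x$ there. This is exactly the content of the inequalities $(\nabla f)\cdot A_1x<0$ on the relevant sectors that the paper verifies for its explicit example, and it is the reason the paper instead invokes Barabanov's theorem after normalising the top growth rate $\Lambda$ to zero: that route gives a norm which is non-increasing along \emph{all} trajectories together with constant-norm trajectories, from which (ii) and the existence half of (iii) follow. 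Second, the uniqueness clause of (iii) is not delivered by ``rotational symmetry and time-translation invariance'': those give at most \emph{existence} of one periodic trajectory through each initial point, and your matrices are not rotationally symmetric anyway. Uniqueness requires showing that any measurable $u$ producing a periodic solution is a.e.\ forced to be the specific bang--bang law; the paper does this by proving the unit circle $\Omega$ of the extremal norm is $C^1$, analysing the tangency sets $\Omega_0,\Omega_1$, and showing $\Omega_0\cap\Omega_1$ consists of exactly four points so that the control is pinned down except on a null set. An appeal to the Pontryagin maximum principle constrains extremal controls, but does not by itself exclude other measurable controls whose trajectories happen to be periodic.

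Third, in (iv) the claim ``every other periodic $u$ is non-extremal, so the Lyapunov exponent along it is strictly negative'' is precisely what has to be proved, and the proposal gives no mechanism for it. The paper's argument goes through the monodromy matrix $R(\tau)$ of the periodic law: the extremal norm gives $\rho(R(\tau))\le 1$, Jacobi's formula gives $\det R(\tau)\in(0,1)$, so if $\rho(R(\tau))=1$ the eigenvalues are real and one equals $\pm1$; hence $R(2\tau)$ has eigenvalue $1$, producing a genuinely periodic solution of period $2\tau$, and only then does the uniqueness statement of (iii) force $u=u_0(\cdot+t_0)$ a.e. Without this (or an equivalent) step your dichotomy does not exclude, say, a non-decaying but non-closed trajectory for a periodic law, and the eigenvalue $-1$ (period-doubling) case is invisible in your sketch. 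Finally, the ``main obstacle'' you flag --- existence of parameters for which $e^{(T-T_0)A_1}e^{T_0A_0}$ has eigenvalue $1$ and the resulting curve is the boundary of an invariant convex body --- is left as a plausible continuity argument; the paper sidesteps it by starting from the checkable open condition \eqref{eq:condition} (verified on the explicit pair $A_0=\left(\begin{smallmatrix}0&1\\-2&0\end{smallmatrix}\right)$, $A_1=\left(\begin{smallmatrix}0&2\\-1&0\end{smallmatrix}\right)$), normalising $\Lambda=0$, and then deriving the closed orbit from the angular-velocity bound, rather than engineering closure by hand.
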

Theorem \ref{th:example} is a modest extension of now-standard stability analyses of second-order bilinear systems under arbitrary switching. Such systems have been analysed independently on several occasions and their properties are treated in various works including \cite{AgMo23,Bo02,BaBoMa09,ChMaSi25,MaLa03,PrMu24,PyRa96,YaXiLe12}.
The results in these works imply the existence of examples satisfying \eqref{it:hurwitz}--\eqref{it:lyapunov} and the first sentence of \eqref{it:unique}, but in these works the uniqueness clause of \eqref{it:unique} and its consequences as described in \eqref{it:exp} are not explicitly addressed. In this note the uniqueness property, and the exponential convergence to zero of trajectories generated by all other periodic switching laws, will play central roles. For this reason we include a proof of Theorem \ref{th:example} in an appendix.

In this work we will also require some basic properties of the Kronecker tensor product of matrices and vectors, which we now describe. Proofs of the following assertions may be found in \cite{HoJo94}. Recall that if $A$ and $B$ are rectangular matrices of dimensions $n_1\times m_1$ and $n_2 \times m_2$ respectively,
\[A=\begin{pmatrix} a_{11}& a_{12} & \cdots &a_{1m_1}\\
a_{21}&a_{22}&\cdots &a_{2m_1}\\
\vdots &\vdots &\ddots &\vdots \\
a_{n_11} &a_{n_12}&\cdots & a_{n_1m_1}\end{pmatrix},
 \]
 \[B=\begin{pmatrix} b_{11}& b_{12} & \cdots &b_{1m_2}\\
b_{21}&b_{22}&\cdots &b_{2m_2}\\
\vdots &\vdots &\ddots &\vdots \\
b_{n_21} &b_{n_22}&\cdots & b_{n_2m_2}\end{pmatrix},\]
then their Kronecker product is the $n_1n_2 \times m_1m_2$ matrix $A\otimes B$ defined by
\[A\otimes B =\begin{pmatrix} a_{11}B & a_{12} B& \cdots &a_{1m_1}B\\
a_{21}B&a_{22}B&\cdots &a_{2m_1}B\\
\vdots &\vdots &\ddots &\vdots \\
a_{n_11}B &a_{n_12}B&\cdots & a_{n_1m_1}B\end{pmatrix}.\]
That is, $A\otimes B$ is the $n_1n_2 \times m_1m_2$ matrix formed by placing all of the possible $n_2\times m_2$ matrices of the form $a_{ij} B$ in an $n_1 \times m_2$ grid in the pattern of the entries of $A$. This construction may also be applied to row and column vectors by considering them as $1\times n$ or $n\times 1$ matrices respectively. In particular if $u$ and $v$ are column vectors of dimension $n$ and $m$ respectively then $u \otimes v$ is a column vector of dimension $nm$. The Kronecker product respects multiplication:
\[(A_0 \otimes A_1) (B_0 \otimes B_1)=(A_0B_0)\otimes (A_1B_1)\]
and also respects addition and scalar multiplication with respect to either (but not both) of the matrices $A$ and $B$, that is
\[(\lambda A_0+A_1) \otimes B = \lambda (A_0\otimes B) + A_1 \otimes B,\]
\[A \otimes (\lambda B_0+B_1) = \lambda (A\otimes B_0) + A \otimes B_1,\]
and using these rules it is not difficult to demonstrate that if $A_1$ and $A_2$ are square matrices then
\[\exp(A_0 \otimes I_1 + I_0 \otimes A_1) = (\exp A_0)\otimes (\exp A_1)\]
where $I_0$ and $I_1$ are identity matrices of the same dimension as $A_0$ and $A_1$ respectively. If $u(t)$ and $v(t)$ are differentiable vector-valued functions then it is easily shown using the above properties that $u(t)\otimes v(t)$ is also differentiable and has derivative $\dot{u}(t)\otimes v(t)+u(t)\otimes \dot{v}(t)$. Lastly, if $A$ and $B$ are square matrices of any dimension, and $u$ and $v$ are column vectors of any dimension, then
\[\|A\otimes B\| = \|A\|\cdot \|B\|\]
and
\[\|u \otimes v\|= \|u\|\cdot\|v\|\]
where $\|X\|$ denotes the Euclidean operator norm of the matrix $X$ and where $\|w\|$ denotes the Euclidean norm of the vector $w$. We will use these properties extensively in the following section without further comment.

We are now in a position to state our main result, which is the following more precise formulation of Theorem \ref{th:headline}:

\begin{theorem}\label{th:not-periodic}
Let $A_0, A_1 \in M_2(\R)$ have the properties described in Theorem \ref{th:example}. Let $\alpha>0$ be irrational, and define four matrices $B_0, B_1, B_2, B_3 \in M_4(\R)$ by 
\[\begin{array}{c} B_0:= A_0\otimes I + \alpha \cdot I \otimes A_0,\\
B_1:= A_1 \otimes I + \alpha \cdot I \otimes A_0,\\
B_2:= A_0\otimes I + \alpha \cdot I \otimes A_1,\\
B_3:= A_1 \otimes I + \alpha\cdot I \otimes A_1,\end{array}\]
where $I$ denotes the $2 \times 2$ identity matrix. Let $\B$ denote the convex hull of $\{B_0,\ldots,B_3\}$. Then:
\begin{enumerate}[(i)]
\item\label{it:you}
There exists a constant $K>0$ such that for every Lebesgue measurable switching law $B \colon [0,\infty) \to \B$ and every $w \in \R^4$ the solution to
\begin{equation}\label{eq:himb-an-equation}
\dot{x}(t)=B(t)x(t)\end{equation}
\begin{equation}\label{eq:himb-a-other-equation} x(0)=w\end{equation}
satisfies $\|x(t)\|\leq K\|x(0)\|$ for all $t\geq 0$.
\item\label{it:themb}
If $B \colon [0,\infty) \to \B$ is Lebesgue measurable and periodic then every solution to \eqref{eq:himb-an-equation} converges exponentially to the origin.
\item\label{it:himb}
There exist a piecewise constant switching law $B \colon [0,\infty) \to \{B_0, B_1, B_2, B_3\}$ and vector $w \in \R^4$ such that the solution to \eqref{eq:himb-an-equation}, \eqref{eq:himb-a-other-equation} does not accumulate at the origin.
\end{enumerate}
\end{theorem}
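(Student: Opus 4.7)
The plan is to exploit the tensor structure built into the definitions of $B_0, B_1, B_2, B_3$. Any convex combination $\sum_j \mu_j B_j$ can be rewritten as
\[((1-u)A_0 + u A_1)\otimes I + \alpha \cdot I \otimes ((1-v)A_0 + v A_1),\]
where $u := \mu_1 + \mu_3$ and $v := \mu_2 + \mu_3$; conversely every pair $(u,v) \in [0,1]^2$ arises in this way (take e.g.\ $\mu_3 = \min(u,v)$), and the four extreme pairs in $\{0,1\}^2$ recover exactly the matrices $B_0, B_1, B_2, B_3$. Because $I, A_0, A_1$ are linearly independent by Theorem \ref{th:example}(\ref{it:hurwitz}), this decomposition is unique, so any measurable $B \colon [0,\infty) \to \B$ produces unique measurable $u, v \colon [0,\infty)\to[0,1]$. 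Let $\Phi_P(t)$ be the fundamental solution of $\dot y = ((1-u(t))A_0 + u(t)A_1)y$ and let $\Phi_Q(s)$ be that of $\dot z = ((1-\tilde v(s))A_0 + \tilde v(s)A_1)z$, where $\tilde v(s) := v(s/\alpha)$. A direct differentiation using the Leibniz rule for Kronecker products shows that $\Psi(t) := \Phi_P(t) \otimes \Phi_Q(\alpha t)$ is the fundamental solution of $\dot x = B(t)x$. Part (\ref{it:you}) is then immediate from $\|\Psi(t)\| = \|\Phi_P(t)\| \cdot \|\Phi_Q(\alpha t)\|$ and Theorem \ref{th:example}(\ref{it:lyapunov}) applied to each factor, giving $K = C^2$.

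For (\ref{it:themb}), suppose $B$ is periodic with period $T_B$. Uniqueness of the decomposition forces $u$ to have period $T_B$ almost everywhere, and likewise $\tilde v$ to have period $\alpha T_B$. Apply the dichotomy of Theorem \ref{th:example}(\ref{it:exp}) to $u$: either every trajectory of the $u$-system decays exponentially, in which case $\|\Phi_P(t)\|$ decays exponentially, or $u = u_0(\cdot + t_0)$ almost everywhere for some $t_0 \in [0,T)$, which (since $u_0$ has minimal period $T$) forces $T_B$ to be a positive integer multiple of $T$. Applied to $\tilde v$, either $\|\Phi_Q(s)\|$ decays exponentially in $s$, so $\|\Phi_Q(\alpha t)\|$ decays exponentially in $t$, or $\alpha T_B$ is a positive integer multiple of $T$. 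If both factors fell into the non-decaying case, the ratio $\alpha T_B / T_B = \alpha$ would be rational, contradicting the hypothesis on $\alpha$. Hence at least one factor of $\Psi(t) = \Phi_P(t) \otimes \Phi_Q(\alpha t)$ decays exponentially while the other is uniformly bounded by Theorem \ref{th:example}(\ref{it:lyapunov}), so every solution of $\dot x = B(t)x$ decays exponentially.

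For (\ref{it:himb}), fix nonzero $w_1, w_2 \in \R^2$ and use Theorem \ref{th:example}(\ref{it:unique}) to obtain periodic trajectories $y(t), z(s)$ of the two-dimensional factor system with initial states $w_1, w_2$ and switching laws $u_1, u_2$ that are piecewise constant and $\{0,1\}$-valued. Set $x(t) := y(t) \otimes z(\alpha t)$ with $x(0) = w_1 \otimes w_2$. This solves $\dot x = B(t)x$ with $B(t)$ corresponding under the decomposition to the pair $(u_1(t), u_2(\alpha t)) \in \{0,1\}^2$, so $B$ is piecewise constant and takes only the values $B_0, B_1, B_2, B_3$. Since $\|x(t)\| = \|y(t)\| \cdot \|z(\alpha t)\|$ and both $\|y(\cdot)\|$ and $\|z(\cdot)\|$ are continuous, periodic, and strictly positive, $\|x(t)\|$ is bounded below by a positive constant, so $x$ does not accumulate at the origin. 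The main delicacy of the argument lies in (\ref{it:themb}): one must carefully track how the period of $v$ transforms under the time reparameterization $s = \alpha t$ needed to convert the second tensor factor into a switched system covered by Theorem \ref{th:example}, so that the two period-divisibility conditions can be combined to extract the rationality contradiction from the irrationality of $\alpha$.
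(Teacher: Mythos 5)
Your proposal is correct and takes essentially the same route as the paper: a pointwise decomposition of elements of $\B$ into a pair of scalar controls (the paper's Proposition \ref{pr:tensorbollox}), tensor factorization of the flow with the time change $s=\alpha t$, Theorem \ref{th:example}\eqref{it:lyapunov} for the uniform bound, the dichotomy of Theorem \ref{th:example}\eqref{it:exp} plus irrationality of $\alpha$ for part \eqref{it:themb}, and the quasi-periodic product trajectory for part \eqref{it:himb}. The only step you gloss over is the uniqueness of the decomposition, which requires verifying that $(A_1-A_0)\otimes I$ and $I\otimes(A_1-A_0)$ are linearly independent when $A_1-A_0$ is not a scalar multiple of $I$ (the paper's Lemma \ref{le:tensorbollocks}); this is routine and your cited hypothesis, the linear independence of $I$, $A_0$, $A_1$, is exactly the right ingredient.
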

The three clauses of Theorem \ref{th:not-periodic} in turn imply that the linear switched system defined by $\B$ is Lyapunov stable, exponentially stable with respect to every periodic switching law, and not globally asymptotically stable with respect to piecewise constant switching signals (hence also with respect to arbitrary switching).

The switching law $B(t)$ constructed in Theorem \ref{th:not-periodic}\eqref{it:himb} is in fact \emph{quasi-periodic} in the following sense: if $u_0$ is as described in Theorem \ref{th:example} then $B(t)=B_0$ when $u_0(t)=u_0(\alpha t)=0$, $B(t)=B_1$ when $1-u_0(t)=u_0(\alpha t)=0$, $B(t)=B_2$ when $u_0(t)=1-u_0(\alpha t)=0$, and  $B(t)=B_3$ when $1-u_0(t)=1-u_0(\alpha t)=0$. It is possible to show that pairs of successive switching times of this law $B(t)$ can occur arbitrarily close together, but that three switching times cannot occur arbitrarily close together.

\section{Proof of Theorem \ref{th:not-periodic}}
Throughout this section we fix two matrices $A_0, A_1 \in M_2(\R)$ having the properties described in Theorem \ref{th:example}, and let $\alpha>0$ and $B_0, B_1, B_2, B_3$ be as in the statement of Theorem \ref{th:not-periodic}. We begin by establishing a result which will allow us to rewrite switching laws with values in $\B$  in terms more directly compatible with the statement of Theorem \ref{th:example}:

\begin{proposition}\label{pr:tensorbollox}
There exist two linear maps $\ell_0, \ell_1 \colon M_4(\R) \to \R$ such that for every $B \in \B$ we have
\[B-B_0=\ell_0(B-B_0)\cdot (B_1-B_0) + \ell_1(B-B_0)\cdot (B_2-B_0).\]
and additionally $0\leq \ell_0(B-B_0), \ell_1(B-B_0)\leq 1$.
\end{proposition}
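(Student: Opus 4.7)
The plan is to reduce the statement to a two-dimensional linear algebra calculation via the Kronecker structure of the $B_i$. First I would compute directly from the definitions that $B_1 - B_0 = (A_1 - A_0) \otimes I$, $B_2 - B_0 = \alpha\, I \otimes (A_1 - A_0)$, and observe the crucial algebraic identity
\[
B_3 - B_0 = (B_1 - B_0) + (B_2 - B_0).
\]
Writing an arbitrary $B \in \B$ as a convex combination $B = \sum_{i=0}^{3} \lambda_i B_i$ with $\lambda_i \geq 0$ and $\sum_i \lambda_i = 1$, and substituting this identity, I obtain
\[
B - B_0 = (\lambda_1 + \lambda_3)(B_1 - B_0) + (\lambda_2 + \lambda_3)(B_2 - B_0),
\]
in which both coefficients lie in $[0,1]$ since each is a subsum of a probability vector. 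This already identifies the would-be values of $\ell_0$ and $\ell_1$ at $B - B_0$; what remains is to show that these values are determined linearly by $B - B_0$ alone, independently of the choice of convex decomposition.

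The main step is therefore to establish that $B_1 - B_0$ and $B_2 - B_0$ are linearly independent in $M_4(\R)$. Setting $M := A_1 - A_0$, this amounts to showing that $M \otimes I$ and $I \otimes M$ are linearly independent. The essential input is Theorem \ref{th:example}(i), which provides linear independence of $I$, $A_0$, $A_1$ in $M_2(\R)$ and so forces $M$ not to be a scalar multiple of the identity. I would then invoke the standard fact that an identity of the form $X \otimes I + I \otimes Y = 0$ in $M_4(\R)$ can hold only when $X$ and $Y$ are scalar matrices summing to zero, or alternatively verify the point directly by comparing the $2\times 2$ block structures of $M \otimes I$ and $I \otimes M$; in either case, any relation $c(M \otimes I) + d\alpha(I \otimes M) = 0$ forces $c = d = 0$.

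With linear independence in hand, I would define $\ell_0, \ell_1$ as the unique linear functionals on the two-dimensional subspace $\mathrm{span}\{B_1 - B_0, B_2 - B_0\}$ satisfying $\ell_i(B_j - B_0) = \delta_{ij}$ for $i,j \in \{1,2\}$, and extend them to linear maps $M_4(\R) \to \R$ in any convenient way (for instance by setting them to zero on a chosen complementary subspace). Linear independence then guarantees that $\ell_0(B - B_0) = \lambda_1 + \lambda_3$ and $\ell_1(B - B_0) = \lambda_2 + \lambda_3$ for every valid convex decomposition of $B \in \B$, yielding both the required representation and the bounds $0 \leq \ell_0(B - B_0), \ell_1(B - B_0) \leq 1$. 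The only point I expect to require care is the linear independence of $M \otimes I$ and $I \otimes M$; everything else is routine bookkeeping once the parallelogram identity $B_3 - B_0 = (B_1 - B_0) + (B_2 - B_0)$ has been noticed.
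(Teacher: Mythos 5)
Your proposal is correct and follows essentially the same route as the paper: the parallelogram identity $B_0+B_3=B_1+B_2$, the reduction of any convex combination to coefficients $\lambda_1+\lambda_3$ and $\lambda_2+\lambda_3$, and the linear independence of $(A_1-A_0)\otimes I$ and $\alpha\, I\otimes(A_1-A_0)$ deduced from Theorem \ref{th:example}(i) via the block-structure comparison, followed by defining $\ell_0,\ell_1$ as dual functionals extended to $M_4(\R)$. No gaps.
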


To prove the proposition we require two lemmas.

\begin{lemma}\label{le:tensorbollocks}
Let $C$ and $D$ be $2 \times 2$ real matrices. If $C \otimes I=I\otimes D$ then $C=D=\gamma I$ for some real number $\gamma$.
\end{lemma}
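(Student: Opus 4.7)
The plan is to reduce the identity $C\otimes I=I\otimes D$ to an entry-by-entry comparison. Writing $C=(c_{ij})_{i,j=1}^2$ and $D=(d_{ij})_{i,j=1}^2$, I would expand $C\otimes I$ as the $4\times 4$ matrix whose four $2\times 2$ blocks are the scalar matrices $c_{ij}I$, and expand $I\otimes D$ as the $4\times 4$ block-diagonal matrix with two diagonal blocks both equal to $D$. The diagonal $2\times 2$ blocks of $C\otimes I$ are scalar while those of $I\otimes D$ equal $D$, forcing $d_{12}=d_{21}=0$ and $c_{11}=d_{11}=d_{22}=c_{22}$. The off-diagonal $2\times 2$ blocks of $I\otimes D$ are zero while those of $C\otimes I$ are $c_{12}I$ and $c_{21}I$, forcing $c_{12}=c_{21}=0$. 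Setting $\gamma$ equal to the common scalar then gives $C=D=\gamma I$.

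A slicker conceptual alternative, which I might present instead, uses the standard commutation rule $(Y\otimes I)(I\otimes X)=(I\otimes X)(Y\otimes I)$. Namely, $C\otimes I$ commutes with every matrix of the form $I\otimes X$, and $I\otimes D$ commutes with every matrix of the form $Y\otimes I$, so under the hypothesis the common value commutes with both families, hence with their products $Y\otimes X$, which linearly span $M_4(\R)$. The matrix therefore lies in the center of $M_4(\R)$ and so equals $\gamma I_4=(\gamma I)\otimes I$ for some $\gamma\in\R$; since $X\mapsto X\otimes I$ is an injective linear map, this gives $C=\gamma I$, and the symmetric argument gives $D=\gamma I$.

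I do not anticipate any real obstacle: the statement is essentially the fact that the two commuting copies of $M_2(\R)$ sitting inside $M_4(\R)\cong M_2(\R)\otimes M_2(\R)$ intersect only in the scalar matrices. Either the direct block-matrix calculation or the centralizer argument dispatches the lemma in a few lines, and I would likely choose the block-matrix version for consistency with the explicit-entry style already used elsewhere in the paper.
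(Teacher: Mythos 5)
Your first (block-matrix) argument is exactly the proof in the paper: expand both Kronecker products as $4\times 4$ matrices and compare entries to get $c_{12}=c_{21}=d_{12}=d_{21}=0$ and $c_{11}=c_{22}=d_{11}=d_{22}=\gamma$. Your alternative centralizer argument is also correct and has the minor advantage of working verbatim in any dimension (a point the paper only remarks on in a footnote), but either version suffices.
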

\begin{proof}
If $C \otimes I=I\otimes D$ with
\[C=\begin{pmatrix} c_{11} & c_{12}\\ c_{21} & c_{22}\end{pmatrix},\quad D=\begin{pmatrix} d_{11} & d_{12}\\ d_{21} & d_{22}\end{pmatrix}\]
then the equation $C \otimes I = I \otimes D$ becomes
\[\begin{pmatrix} c_{11}&0 &c_{12}&0\\
0&c_{11} &0&c_{12}\\
c_{21}&0&c_{22}&0\\
0&c_{21}&0&c_{22}\end{pmatrix}=\begin{pmatrix} d_{11}&d_{12} &0&0\\
d_{21}&d_{22} &0&0\\
0&0&d_{11}&d_{12}\\
0&0&d_{21}&d_{22}\end{pmatrix}
\]
and we immediately find that $c_{12}=c_{21}=d_{12}=d_{21}=0$ and $c_{11}=d_{11}=c_{22}=d_{22}$ as required.
\end{proof}

\begin{lemma}
The matrices $B_1-B_0$ and $B_2-B_0$ are linearly independent.
\end{lemma}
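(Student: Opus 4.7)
The plan is to compute the two matrices explicitly using bilinearity of the Kronecker product and then reduce a putative linear dependence to an application of Lemma \ref{le:tensorbollocks}.

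First I would write
\[B_1-B_0 = (A_1-A_0)\otimes I,\qquad B_2-B_0 = \alpha\cdot I\otimes (A_1-A_0),\]
which follows immediately by cancelling the common summands in the defining formulas for $B_0,B_1,B_2$ and using $(\lambda A + A')\otimes B = \lambda(A\otimes B)+A'\otimes B$ and the analogous identity in the second slot.

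Next, I would suppose $\lambda_1(B_1-B_0)+\lambda_2(B_2-B_0)=0$ for some scalars $\lambda_1,\lambda_2$, and aim to show $\lambda_1=\lambda_2=0$. Setting $C:=A_1-A_0$, the hypothesis becomes
\[\lambda_1\,C\otimes I + \lambda_2\alpha\cdot I\otimes C = 0.\]
Since $I,A_0,A_1$ are linearly independent by Theorem \ref{th:example}\eqref{it:hurwitz}, in particular $C=A_1-A_0$ is not a scalar multiple of $I$; and $C\otimes I\neq 0$ since $C\neq 0$. Hence if $\lambda_1=0$ we get $\lambda_2\alpha\cdot I\otimes C=0$, and using $\alpha\neq 0$ and $C\neq 0$ we conclude $\lambda_2=0$. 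If instead $\lambda_1\neq 0$, I can divide through and rearrange to obtain $C\otimes I = I\otimes(\beta C)$ for $\beta=-\lambda_2\alpha/\lambda_1$. Lemma \ref{le:tensorbollocks}, applied with $D=\beta C$, then forces $C=\gamma I$ for some real $\gamma$, contradicting the linear independence of $I,A_0,A_1$.

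This approach has no real obstacle: the only substantive input beyond routine Kronecker algebra is the combination of Lemma \ref{le:tensorbollocks} with clause \eqref{it:hurwitz} of Theorem \ref{th:example}, and the role of the irrationality (or even nonzeroness) of $\alpha$ enters only via the trivial observation $\alpha\neq 0$.
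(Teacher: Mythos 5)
Your proof is correct and follows essentially the same route as the paper: compute $B_1-B_0=(A_1-A_0)\otimes I$ and $B_2-B_0=\alpha\cdot I\otimes(A_1-A_0)$, reduce a linear dependence to an equation of the form $C\otimes I=I\otimes D$, and invoke Lemma \ref{le:tensorbollocks} together with the linear independence of $I,A_0,A_1$ from Theorem \ref{th:example}\eqref{it:hurwitz}. The only difference is that you spell out the trivial case analysis on the scalars slightly more explicitly than the paper does.
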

\begin{proof}
We have
\begin{align*}B_1-B_0&=(A_1-A_0)\otimes I,\\
B_2-B_0&=\alpha \cdot I\otimes(A_1-A_0),\end{align*}
and if these two matrices are not linearly independent then an equation of the form
\[(A_1-A_0)\otimes I = \gamma \cdot I \otimes (A_1-A_0)\]
must hold. By Lemma \ref{le:tensorbollocks} this is only possible if $A_1-A_0$ is a scalar multiple of the identity matrix, which contradicts Theorem \ref{th:example}\eqref{it:hurwitz}.
\end{proof}

We now prove Proposition \ref{pr:tensorbollox}:

\begin{proof}
Since $B_1-B_0$ and $B_2-B_0$ are linearly independent, there exist linear maps $\ell_0, \ell_1 \colon M_4(\R) \to \R$ such that
\[\ell_0(B_1-B_0)=1,\quad \ell_0(B_2-B_0)=0,\]
\[\ell_1(B_1-B_0)=0, \quad \ell_1(B_2-B_0)=1.\]
If $B$ belongs to the convex hull of $B_0, B_1, B_2, B_3$ then we may write
\[B=\sum_{i=0}^3 \beta_i B_i\]
where $\sum_{i=0}^3 \beta_i=1$ and where $0 \leq \beta_i \leq 1$ for every $i$. The identity
\[B_0+B_3=B_1+B_2\]
follows from the definition of the matrices $B_i$, so
\begin{align*}B-B_0 &= (\beta_0-1)B_0 + \beta_1B_1+\beta_2B_2 + \beta_3 B_3\\
&= (\beta_0-1)B_0 + \beta_1B_1+\beta_2B_2 + \beta_3(B_1+B_2-B_0)\\
%&=-(\beta_1+\beta_2+2\beta_3)B_0 + (\beta_1+\beta_3)B_1+ (\beta_2+\beta_3)B_2\\
&=(\beta_1+\beta_3)(B_1-B_0)+(\beta_2+\beta_3)(B_2-B_0).\end{align*}
Applying $\ell_0$ and $\ell_1$ to both sides of the above equation immediately yields
\[\ell_0(B-B_0)=\beta_1+\beta_3,\]
\[\ell_1(B-B_0)=\beta_2+\beta_3.\] In particular
\[B-B_0=\ell_0(B-B_0)(B_1-B_0)+\ell_1(B-B_0)(B_2-B_0)\]
and $0\leq \ell_i(B-B_0) \leq 1$ for $i=0,1$ as required.
\end{proof}

We now prove Theorem \ref{th:not-periodic}. For the remainder of this section we fix linear maps $\ell_0, \ell_1 \colon M_4(\R) \to \R$ as given by Proposition \ref{pr:tensorbollox}, and let $e_0, e_1$ denote the standard basis for $\R^2$. We observe that $e_0\otimes e_0$, $e_0\otimes e_1$, $e_1\otimes e_0$, $e_1\otimes e_1$ is precisely the standard basis for $\R^4$. Fix also $C,T>0$ as given by clauses \eqref{it:lyapunov} and \eqref{it:unique} of Theorem \ref{th:example}.

We begin by demonstrating clauses \eqref{it:you} and \eqref{it:themb} of Theorem \ref{th:not-periodic}. Let $B \colon [0,\infty) \to \mathsf{B}$ be a measurable switching law. We will show that if $w_0, w_1 \in \R^2$ are unit vectors then the solution to $\dot{y}(t)=B(t)y(t)$ with initial condition $y(0)=w_0\otimes w_1$ satisfies $\|y(t)\|\leq C^2$ for all $t\geq 0$, and if additionally $B$ is periodic then $y(t)$ converges to zero exponentially. Since the vectors $e_i\otimes e_j$ form a basis for $\R^4$ this implies (by linear superposition of solutions) that the same conclusions hold for a trajectory beginning at an arbitrary unit vector, except that an upper bound of $4C^2$ is obtained in place of $C^2$. In particular this special case directly implies the full statements of \eqref{it:you} and \eqref{it:themb} by appeal to linear superposition of solutions.

Fix unit vectors $w_0,w_1 \in \R^2$, therefore, and define switching laws $v_0, v_1 \colon [0,\infty) \to [0,1]$ by $v_0(t):=\ell_0(B(t)-B_0)$ and $v_1(t):=\ell_1(B(t)-B_0)$ respectively. Since $\ell_0$ and $\ell_1$ are continuous, $v_0$ and $v_1$ are measurable. Using Proposition \ref{pr:tensorbollox}, for every $t \geq 0$ we have
\begin{align*}B(t)&= B_0 + v_0(t)(B_1-B_0) + v_1(t)(B_2-B_0)\\
&=((1-v_0(t))A_0 + v_0(t)A_1)\otimes I \\
&\qquad + \alpha\cdot I \otimes ((1-v_1(t))A_0+v_1(t)A_1).\end{align*}
Define $x_0,x_1\colon [0,\infty) \to \R^2$ to be the unique absolutely continuous solutions to the initial value problems
\[\dot{x}_0(t) = ((1-v_0(t))A_0 + v_0(t)A_1) x_0(t),\] 
\[\dot{x}_1(t) = \alpha ((1-v_1(t))A_0 + v_1(t)A_1) x_1(t),\]
\[x_0(0)=w_0,\quad x_1(0)=w_1\]
and observe that $y(t):=x_0(t)\otimes x_1(t)$ is the unique absolutely continuous solution to $\dot{y}(t)=B(t)y(t)$ a.e. and $y(0)=w_0\otimes w_1$. Since $z(t):=x_1(\alpha t)$ solves
\[\dot{z}(t)=((1-v_1(\alpha t))A_0+v_1(\alpha t)A_1)z(t)\]
it follows from Theorem \ref{th:example}\eqref{it:lyapunov} that $\|x_1(t)\|=\|z(t/\alpha)\|\leq C$ for all $t \geq0$, and directly $\|x_0(t)\|\leq C$ for all $t \geq 0$. In particular $\|y(t)\|\leq C^2$ for all $t \geq 0$ as required.

If additionally $B(t)$ is periodic with period $p$ then each of $v_0(t)$ and $v_1(t)$ either is constant or is periodic with least period dividing $p$. If $v_i$ is constant a.e. then $x_i(t)\to0$ exponentially since every convex combination of $A_0$ and $A_1$ is Hurwitz, so suppose instead that both are periodic. Then $v_0(t)$ and $v_1(\alpha t)$ are both periodic but the ratio of their least periods is a rational multiple of $\alpha$, hence is irrational, so at least one of them is \emph{not} periodic with period $T$. It follows by Theorem \ref{th:example}\eqref{it:exp} that either $x_0(t)$ or $z(t)=x_1(\alpha t)$ converges exponentially to zero while the other is uniformly bounded by Theorem \ref{th:example}\eqref{it:lyapunov}. In particular $y(t)$ converges exponentially to zero as required. This completes the proof of \eqref{it:you} and \eqref{it:themb} for trajectories whose initial condition has the form $w_0\otimes w_1$ where $w_0,w_1 \in \R^2$ are unit vectors, and the general case follows by linear superposition as noted previously.

It remains to prove Theorem \ref{th:not-periodic}\eqref{it:himb}. By Theorem \ref{th:example}\eqref{it:unique} there exist $u \colon [0,\infty) \to [0,1]$ and nonzero $w \in \R^2$ such that the solution to
 \[\dot{x}(t)=((1-u(t))A_0 + u(t)A_1) x(t),\quad x(0)=w\]
 is periodic. Define $B \colon [0,\infty) \to \B$ by
 \begin{align*}
 B(t)&=(1-u(t))(1-u(\alpha t))B_0 + u(t)(1-u(\alpha t))B_1\\
 & \qquad + (1-u(t))u(\alpha t)B_2+u(t)u(\alpha t)B_3\\
 &=B_0 + u(t)(B_1-B_0)+u(\alpha t)(B_2-B_0)\\
 &=((1-u(t))A_0 + u(t)A_1)\otimes I \\
 &\qquad + \alpha\cdot I \otimes ((1-u(\alpha t))A_0+u(\alpha t)A_1)\end{align*}
 so that $y(t):=x(t)\otimes x(\alpha t)$ solves $\dot{y}(t)=B(t)y(t)$ a.e. and $y(0)=w\otimes w$. Since $x(t)$ is periodic and never zero, we have
 \[\inf_{t \geq 0} \|y(t)\| = \inf_{t \geq 0}\|x(t)\otimes x(\alpha t)\|  \geq \left(\inf_{t \geq 0} \|x(t)\|\right)^2>0\]
 so that $y(t)$ remains a bounded distance away from the origin at all times. The proof of the theorem is complete.
 
 \section{An explicit example}\label{se:explicit}
 We illustrate Theorem \ref{th:not-periodic} with an explicit example.
 Consider two matrices $A_0, A_1 \in M_2(\R)$ defined by
\[A_0=\begin{pmatrix} -1 & \frac{\sqrt{\tau}(\tau-1)}{\sqrt{2}}  \\ \frac{1-\tau}{\sqrt{2\tau}} & -\tau\end{pmatrix},\quad A_1=\begin{pmatrix} -\tau & \frac{\tau-1}{\sqrt{2\tau}}  \\ \frac{\sqrt{\tau}(1-\tau)}{\sqrt{2}} & -1\end{pmatrix},\]
where   $\tau=0.1299992\ldots $ is the unique real number which satisfies
\[\tau=\exp\left(\frac{\pi(\tau+1)}{2(\tau-1)}\right),\qquad 0<\tau<1.\]
This pair $A_0,A_1$ can be shown to satisfy the properties described in Theorem \ref{th:example}. We briefly outline a demonstration of this fact which applies the ideas of \cite{MaLa03} to exhibit a non-strict common Lyapunov function. Consider the function $f\colon \R^2 \to \R$ which is zero at the origin and which is otherwise given by
\[\left(x_1^2+\sqrt{2\tau} x_1x_2+\tau x_2^2\right)e^{\frac{2(\tau+1)}{\tau-1} \arctan \left(\frac{ x_1}{x_1+\sqrt{2\tau} x_2}\right)}\]
when $(x_1,x_2) \in \R^2$ satisfies $x_1x_2 \geq 0$ and by 
\[\left(\tau x_1^2-\sqrt{2 \tau }x_1x_2+x_2^2\right)e^{\frac{2(1+\tau)}{1-\tau} \arctan \left(\frac{x_2}{\sqrt{2 \tau}x_1-x_2}\right)}\]
when $x_1x_2\leq 0$. A prolonged but elementary calculation shows that $f$ 
%\[f(x_1, x_2):=\left\{\begin{array}{cl}\left(x_1^2+\sqrt{2\tau} x_1x_2+\tau x_2^2\right)\exp\left( \frac{2(1+\tau)}{\tau-1} \arctan \left(\frac{ x_1}{x_1+\sqrt{2\tau} x_2}\right)\right)&\text{if }x_1x_2\geq 0,\\
%\left(\tau x_1^2-\sqrt{2 \tau }x_1x_2+x_2^2\right)\exp\left(\frac{2(1+\tau)}{1-\tau} \arctan \left(\frac{x_2}{\sqrt{2 \tau}x_1-x_2}\right)\right)&\text{if }x_1x_2\leq 0.\end{array}\right.\]
is a $C^1$ function, is non-negative for all nonzero $(x_1,x_2)$, and satisfies
\begin{figure}\centering
\includegraphics[width=0.9\linewidth]{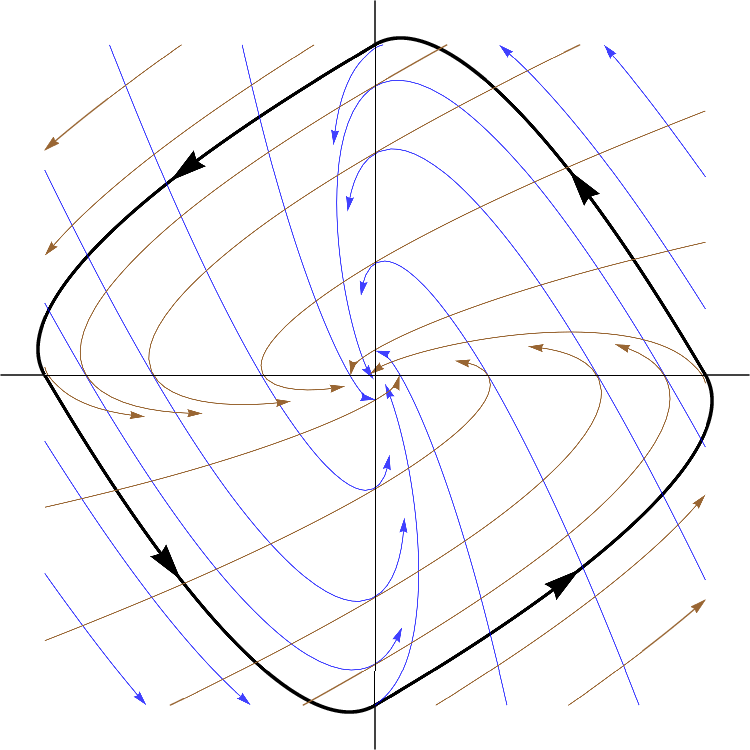}
\caption{The black line shows a periodic trajectory of the linear switched system defined by the matrices $A_0$, $A_1$ considered in section \ref{se:explicit}. Blue and brown flow lines follow the vector fields defined by $A_0$ and $A_1$ respectively. The periodic trajectory describes a level curve of the non-strict Lyapunov function $f$, and switches between the two vector fields upon crossing the horizontal or vertical axis.  }\label{fi:gure}
\end{figure}
\[\begin{array}{cl}
(\nabla f)(x_1,x_2) \cdot A_0(x_1,x_2)^T=0&\text{if }x_1x_2\geq 0,\\
(\nabla f)(x_1,x_2) \cdot A_0(x_1,x_2)^T<0&\text{if }x_1x_2<0\end{array}\]
and
\[\begin{array}{cl}(\nabla f)(x_1,x_2) \cdot A_1(x_1,x_2)^T=0&\text{if }x_1x_2\leq 0,\\
(\nabla f)(x_1,x_2) \cdot A_1(x_1,x_2)^T<0&\text{if }x_1x_2> 0.\end{array}
\]
These properties imply that  $f$ is a $C^1$ non-strict Lyapunov function for the corresponding switched system: if $x(t)$ is a nonzero solution to an equation
\[\dot{x}(t)=((1-u(t))A_0 + u(t)A_1)x(t)\]
then $f(x(t))$ is non-increasing, and is constant if and only if  the underlying switching law $u(t)$ satisfies $u(t)=0$ for a.e. $t$ such that the two co-ordinates of $x(t)$ have the same sign, and also satisfies $u(t)=1$ for a.e. $t$ such that the co-ordinates of $x(t)$ have different signs. A periodic trajectory of this system is shown in Figure \ref{fi:gure}. This pair of matrices has the property that $A_0$ is conjugated to $A_1$ by a rotation of $\pi/2$, and consequently the periodic trajectory is symmetrical with respect to this rotation. As a consequence of this symmetry, the switching laws corresponding to periodic trajectories consist of two bang intervals of equal duration.
Following the construction of Theorem \ref{th:not-periodic} with $\alpha:=\sqrt{2}$, the switched linear system defined by the four matrices
\[B_0=\begin{pmatrix}
-1-\sqrt{2}&\sqrt{\tau}(\tau-1)&\frac{\sqrt{\tau}(\tau-1)}{\sqrt{2}}&0\\
\frac{1-\tau}{\sqrt{\tau}}&-1-\tau\sqrt{2}&0&\frac{\sqrt{\tau}(\tau-1)}{\sqrt{2}}\\
\frac{1-\tau}{\sqrt{2\tau}}&0&-\tau-\sqrt{2}&\sqrt{\tau}(\tau-1)\\
0&\frac{1-\tau}{\sqrt{2\tau}}&\frac{1-\tau}{\sqrt{\tau}}&-\tau-\tau\sqrt{2}\\
\end{pmatrix},\]
\[B_1=\begin{pmatrix}-\tau-\sqrt{2}&\sqrt{\tau}(\tau-1)&\frac{\tau-1}{\sqrt{2\tau}}&0\\
\frac{1-\tau}{\sqrt{\tau}}&-\tau-\tau\sqrt{2}&0&\frac{\tau-1}{\sqrt{2\tau}}\\
\frac{\sqrt{\tau}(1-\tau)}{\sqrt{2}}&0&-1-\sqrt{2}&\sqrt{\tau}(\tau-1)\\
0&\frac{\sqrt{\tau}(1-\tau)}{\sqrt{2}}&\frac{1-\tau}{\sqrt{\tau}}&-1-\tau\sqrt{2}
\end{pmatrix},\]
\[B_2=\begin{pmatrix}
-1-\tau\sqrt{2}&\frac{\tau-1}{\sqrt{\tau}}&\frac{\sqrt{\tau}(\tau-1)}{\sqrt{2}}&0\\
\sqrt{\tau}(1-\tau)&-1-\sqrt{2}&0&\frac{\sqrt{\tau}(\tau-1)}{\sqrt{2}}\\
\frac{1-\tau}{\sqrt{2\tau}}&0&-\tau-\tau\sqrt{2}&\frac{\tau-1}{\sqrt{\tau}}\\
0&\frac{1-\tau}{\sqrt{2\tau}}&\sqrt{\tau}(1-\tau) &-\tau-\sqrt{2}\\
\end{pmatrix},\]
\[B_3=\begin{pmatrix}-\tau-\tau\sqrt{2}&\frac{\tau-1}{\sqrt{\tau}}&\frac{\tau-1}{\sqrt{2\tau}}&0\\
\sqrt{\tau}(1-\tau)&-\tau-\sqrt{2}&0&\frac{\tau-1}{\sqrt{2\tau}}\\
\frac{\sqrt{\tau}(1-\tau)}{\sqrt{2}}&0&-1-\tau\sqrt{2}&\frac{\tau-1}{\sqrt{\tau}}\\
0&\frac{\sqrt{\tau}(1-\tau)}{\sqrt{2}}&\sqrt{\tau}(1-\tau)&-1-\sqrt{2}
\end{pmatrix} \]
is exponentially stable with respect to every periodic switching law,  but is not globally uniformly asymptotically stable.

\section{Conclusions}

We have shown in Theorem \ref{th:not-periodic} above that the question of R. Shorten \emph{et al} published in \cite{ShWiMaWuKi07} has a negative answer: there exist linear switched systems with four states, in four dimensions, for which constant and periodic switching laws can produce only trajectories which converge exponentially to zero, but such that this linear switched system is not GUAS. We note that by a recently-announced result of J. Bochi and the author \cite{BoMo26}, such counterexamples cannot exist in dimension three or lower, so the counterexample presented here is of the lowest dimension possible. %Our construction depends on the irrationality of certain real parameters and is therefore not robust with respect to perturbations, but for typical pertur

During the proof of Theorem \ref{th:not-periodic} we used only those properties of the matrices $A_0$ and $A_1$ which were explicitly stated in Theorem \ref{th:example}, and we also made no significant use of the fact that those matrices were specifically two-dimensional. \footnote{While Lemma \ref{le:tensorbollocks} was stated in the specific context of two-dimensional matrices, it holds in arbitrary dimension $d\geq 1$ by essentially the same proof. We also did not use any of the precise structural features of the periodic switching laws described in Theorem \ref{th:example}\eqref{it:unique} other than the uniqueness of the period.} In particular, if a pair of Metzler matrices $A_0, A_1 \in M_d(\R)$ could be found which also satisfy the conclusions of Theorem \ref{th:example}  then the same arguments would result in a set of four $d^2 \times d^2$ Metzler matrices which define a linear switched system which is Lyapunov stable, is not GUAS, and has the property that all trajectories defined by periodic switching laws converge exponentially to zero. Examples considered in \cite{ChGaMa15,FaMaCh09} suggest, but do not prove, that a pair of Metzler matrices with these properties is likely to exist in dimension three. On the other hand, by applying the lifting arguments of \cite{GuShMa07} to the matrices of Theorem \ref{th:example} we anticipate that a pair of Metzler matrices with the same essential properties could be constructed in sufficiently high dimension. Consequently we conjecture that counterexamples to Question \ref{qu:that-question} with the additional property of being Metzler matrices ought to exist in sufficiently high dimension, and perhaps in particular might exist in dimension nine.

The examples presented in this note leave open the matter of whether or not counterexamples to Question \ref{qu:that-question} can exist which have fewer than four switching states. We see no reason in principle why examples with two or three switching states should not exist, but we are not currently able to resolve this question in either direction. %believe it likely that such counterexamples ought to exist (at least in sufficiently high dimension) we are not currently able to support this belief with a proof.

\appendix

\section{Proof of Theorem \ref{th:example}}
In this appendix we indicate the proof of Theorem \ref{th:example}. We will make use of two results on Lyapunov norms of linear switched systems which are due respectively to N.E. Barabanov \cite{Ba88b} and Y. Chitour, M. Gaye and P. Mason \cite{ChGaMa15}; the proof is otherwise largely self-contained.

Choose any $A_0,A_1 \in M_2(\R)$ which satisfy
\begin{equation}\label{eq:condition}\max_{\gamma \in [0,1]} \rho\left(e^{(1-\gamma) A_0 + \gamma A_1)}\right) < \sup_{t_0,t_1>0} \rho\left(e^{t_0 A_0}e^{t_1A_1}\right)^{\frac{1}{t_0+t_1}}\end{equation}
where $\rho$ denotes spectral radius. Here the first quantity may be interpreted as the maximal exponential growth rate of a trajectory with constant switching law, and the second the maximal growth rate under periodic bang-bang switching laws with two switching intervals. The reader may easily compute that the example
\[A_0:=\begin{pmatrix}0&1 \\-2&0\end{pmatrix}, \qquad A_1:=\begin{pmatrix}0&2\\-1&0\end{pmatrix}\]
satisfies \eqref{eq:condition} by considering $t_0=t_1=1$, say. 
Now define the maximal growth rate of all trajectories of $A_0,A_1$,
\[\Lambda:=\lim_{t \to \infty}\sup_{x,u} \frac{1}{t}\log \left(\frac{\|x(t)\|}{\|x(0)\|}\right),\]
where the supremum is over all absolutely continuous functions $x \colon [0,\infty) \to \R^2$ which solve \eqref{eq:that-equation} for some measurable switching law $u \colon [0,\infty) \to [0,1]$ and some nonzero initial condition $x(0)$ on the unit circle. The existence of the limit $\Lambda$ is guaranteed by Fekete's lemma. By replacing $A_0$ and $A_1$ with $A_0-\Lambda \cdot I$ and $A_1-\Lambda \cdot I$ if necessary, we will additionally assume without loss of generality that $\Lambda$ is equal to zero. Clearly this subtraction of $\Lambda \cdot I$ does not change the validity of \eqref{eq:condition} since the effect of this transformation is to multiply both sides by precisely $e^{-\Lambda}$. Once this transformation has been made the right-hand side of the inequality is clearly at most $1$ and hence the left-hand side is strictly less than $1$, implying that every $(1-\gamma) A_0+\gamma A_1$ is Hurwitz. It is easily checked that \eqref{eq:condition} is impossible if $A_0$ and $A_1$ have a shared real eigenvector (or equivalently, if there exists an invertible matrix $X$ such that $XA_0X^{-1}$ and $XA_1X^{-1}$ are both upper triangular) and is also impossible if $I$, $A_0$ and $A_1$ are linearly dependent. We have established \eqref{it:hurwitz}.

We now prove \eqref{it:lyapunov} and the clause of \eqref{it:unique} which guarantees the existence of a periodic trajectory. The real number
\[\min_{\|w\|=1} \min_{\gamma \in [0,1]} \left|\det [((1-\gamma) A_0 + \gamma A_1)w,w]\right|\]
cannot be zero since this would imply the existence of a zero  eigenvalue for some $(1-\gamma) A_0 + \gamma A_1$, contradicting the Hurwitz property. Hence there exists $\kappa>0$ such that either 
\begin{equation}\label{eq:kappa1}\min_{\|w\|=1} \min_{\gamma \in [0,1]} \det [((1-\gamma) A_0 + \gamma A_1)w,w] \geq \kappa\end{equation}
or 
\begin{equation}\label{eq:kappa2}\max_{\|w\|=1} \max_{\gamma \in [0,1]} \det [((1-\gamma) A_0 + \gamma A_1)w,w] \leq -\kappa.\end{equation}
The matrices $A_0$ and $A_1$  do not have a shared real eigenvector, so by a theorem of N.E. Barabanov (\cite{Ba88b}, see also \cite[Theorem 4.8]{ChMaSi25}) it follows that there exists a norm $\threebar{\cdot}$ on $\R^2$ which is non-increasing along all trajectories of \eqref{eq:that-equation}, and such that additionally for every $w \in \R^2$ there exists a solution $x(t)$ with initial position $w$ such that $\threebar{x(t)}$ is constant. The first of these two properties immediately yields (ii). Now let $y(t)$ be a solution to \eqref{eq:that-equation} with arbitrary initial condition $y(0):=w \neq 0$ and measurable switching law $v$, say, and along which $\threebar{y(t)}$ is constant and nonzero. The angular velocity of this solution is a.e. equal to $\det [\dot{y}(t), y(t)] / \|y(t)\|^2$ and in view of \eqref{eq:kappa1}--\eqref{eq:kappa2} it follows that this angular velocity has a consistent sign and is uniformly bounded away from zero. Thus $y(t)$ must perform infinitely many rotations around the origin in a consistent direction. In particular there exists $T>0$ such that $y(T)$ is a positive scalar multiple of $y(0)$, and since $\threebar{y(T)}=\threebar{y(0)}$ we must have $y(T)=y(0)$. Defining $u \colon [0,\infty) \to [0,1]$ to be the periodic switching law obtained by periodic repetition of $v|_{[0,T]}$, and similarly defining $x \colon [0,\infty) \to \R^2$ by periodically repeating $y|_{[0,T]}$, it follows that there exists a periodic solution of \eqref{eq:that-equation} with initial condition $w$. This proves the existence statement of \eqref{it:unique}, but it remains to show that the function $u$ is unique and has the required form $u(t)=u_0(t+t_0)$ a.e.

In order to prove this remaining part of \eqref{it:unique} we first collect some facts concerning the unit circle of the norm $\threebar{\cdot}$. Let $\Omega$ denote this unit circle. Since $\Omega$ is the boundary of a convex region, it has at least one point of differentiability; since by the previous observations there exists a periodic trajectory which visits every point of this unit circle, we may apply \cite[Proposition 4.35]{ChMaSi25} to deduce that $\Omega$ is a $C^1$ curve. For $i=0,1$  let $\Omega_i$ denote the set of all $w \in \Omega$ such that $A_iw$ is a tangent direction to $\Omega$ at $w$. Clearly each $\Omega_i$ contains all of its limit points and is radially symmetric, i.e. $w \in \Omega_i$ if and only if $-w \in \Omega_i$. We claim that every $w \in \Omega$ belongs to either $\Omega_0$ or $\Omega_1$. Indeed, if $w$ does not belong to this union then either both of the vectors $A_iw$ point from $w$ into the interior of the unit ball of $\threebar{\cdot}$, or at least one of them points towards the exterior. In the former case all trajectories originating at $w$ would have to enter the open unit ball of $\threebar{\cdot}$, contradicting the existence of a trajectory along which $\threebar{\cdot}$ is constant; in the latter case there would exist a trajectory originating at $w$ along which $\threebar{\cdot}$ locally increases, which is also a contradiction. The claim is proved. We finally observe that if $w \in \Omega_0\cap\Omega_1$ then since $\Omega$ is a differentiable curve the vectors $A_0w$ and $A_1w$ must be proportional to one another, hence $w$ is an eigenvector of $A_0^{-1}A_1$. Since $A_0, A_1$ are linearly independent, $A_0^{-1}A_1$ is not a scalar multiple of the identity and has at most two distinct real eigenvectors. If $\Omega_0 \cap \Omega_1$ is not empty, therefore, it can contain either exactly two or exactly four points, corresponding to the intersection points of $\Omega$ with the real eigenspaces of $A_0^{-1}A_1$ (if any such eigenspaces exist). 
 
We next show that neither $\Omega_0$ nor $\Omega_1$ is equal to the whole of $\Omega$. Suppose for a contradiction that $\Omega=\Omega_i$, say. We know that there exists a periodic trajectory $x(t)$ which lies entirely in $\Omega$, and the tangent to this trajectory is therefore a.e. tangent to $\Omega$, hence in this case $\dot{x}(t)=A_ix(t)$ for a.e. $t$; since $A_i$ is Hurwitz such a trajectory cannot be periodic, and we have obtained a contradiction. This implies that each of $\Omega_0$ and $\Omega_1$ contains an arc of nonzero length, with the endpoint of each arc belonging to $\Omega_0 \cap \Omega_1$. The radial symmetry of these sets makes it impossible for either $\Omega_0$ or $\Omega_1$ to contain just one single arc, so each contains at least two arcs and therefore $\Omega_0 \cap \Omega_1$ must contain at least four points. We already know that $\Omega_0 \cap \Omega_1$ can contain \emph{at most} four points, so $\Omega_0 \cap \Omega_1$  contains exactly four points and each $\Omega_i$ is the union of two closed arcs in $\Omega$, one of which is the image of the other under the symmetry $w \mapsto -w$. %Each of the four points in $\Omega_0 \cap \Omega_1$ is the 

We may now prove the remainder of \eqref{it:unique}. We will show that if $x(t)$ is a nonzero periodic trajectory generated by a switching law $u$, say, then there exists $t_0 \geq 0$ such that $u(t)=u_0(t+t_0)$ a.e, where $u_0 \colon [0,\infty) \to [0,1]$ is a fixed switching law which satisfies $u_0(t)=0$ for all $t \in [0,T_0)$, $u_0(t)=1$ for all $t \in [T_0,T_0+T_1)$, and $u_0(t)\equiv u_0(t+T_0+T_1)$, for some fixed real numbers $T_0,T_1>0$. 

By homogeneity it suffices to consider only initial points $w \in \Omega$. We begin by finding a single initial point $w_0$ which is the origin of a unique periodic trajectory. Since every trajectory $x(t)$ travels with positive speed in the same angular direction, we may choose $w\in \Omega_0 \cap \Omega_1$ such that every  trajectory originating at $w$ travels immediately into the interior of the arc $\Omega_0$. Fix such a vector $w_0$ and let $x_0(t)$ be a periodic trajectory such that $x_0(0)=w_0$ and $\threebar{x_0(t)}$ is constant. Choose a maximal open interval $(0,T_0)$ such that $x_0(t)$ lies in the interior of $\Omega_0$. Clearly $\dot{x}_0(t)$ must be tangent to $\Omega$ for a.e. $t \in (0,T_0)$, so by the definition of $\Omega_0$ we have $u(t)=0$ a.e. in that interval. Necessarily $x_0(T_0)\in \Omega_0 \cap \Omega_1$ and in the same manner $x_0(t)$ lies in the interior of $\Omega_1$ for all $t$ in some maximal open interval $(T_0,T_0+T_1)$, say, in which case $u(t)=1$ for a.e. $t \in (T_0,T_0+T_1)$ by the same reasoning. We have now traversed one arc of $\Omega_0$ and one arc of $\Omega_1$ and have therefore arrived diametrically opposite our point of origin at $x_0(T_0+T_1)=-w_0$. Using the symmetry of $\Omega_0$ and $\Omega_1$ we obtain $u(t)=0$ a.e. for $t \in (T_0+T_1, 2T_0+T_1)$ and $u(t)=1$ a.e. for $t \in (2T_0+T_1, 2T_0+2T_1)$, and $x_0(2T_0+2T_1)=x_0(0)$ by similar reasoning. Repeating these steps inductively shows that $x_0(t)\equiv x_0(t+2T_0+2T_1)$ and that $u(t)=u(t+T_0+T_1)$ a.e, and thus $u(t)$ is a.e. equal to the function $u_0(t)$ just defined. This proves the uniqueness clause for periodic trajectories originating at the point $w_0\in \Omega$. But every $w \in \Omega$ can be written as $w=x_0(t_0)$ for some $t_0 \in [0,2T_0+2T_1)$, and in particular is the initial point of the trajectory $x(t):=x_0(t+t_0)$ with switching law $u(t):=u_0(t+t_0)$. If there existed a distinct periodic trajectory $y(t)$ originating at this $w$ with a switching law $v(t)$ not a.e. equal to $u_0(t+t_0)$, then since that trajectory would at some future time pass through $w_0$, by alternating between the two distinct switching laws  we could construct a distinct periodic trajectory originating at $w_0$ with a switching law not equal to $u_0(t)$, contradicting the result just shown. Since $u_0(t+t_0)$ obviously has the same period as $u_0(t)$ we have proved the full statement of \eqref{it:unique} with $T:=T_0+T_1$.

It remains to prove \eqref{it:exp}. Given a measurable periodic function $u \colon [0,\infty) \to [0,1]$ with period $\tau$, say, define $R \colon [0,\infty) \to M_2(\R)$ to be the unique absolutely continuous solution to the initial value problem
\[\dot{R}(t)=((1-u(t))A_0+u(t)A_1)R(t),\qquad R(0)=I\]
and note that for every $w \in \R^2$ the function $x(t):=R(t)w$ is precisely the solution to \eqref{eq:that-equation} with initial condition $w$. It follows directly from this observation that $\threebar{R(t)} \leq 1$ for all $t\geq 0$ and in particular $\rho(R(\tau))\leq 1$. Since $u$ is periodic with period $\tau$ the functions $t\mapsto R(t+\tau)$ and $t\mapsto R(t)R(\tau)$ solve an identical  initial value problem and are therefore identical; it follows by induction that $R(n\tau)=R(\tau)^n$ for every $n \geq 0$. By Jacobi's formula the determinant $D(t):=\det R(t)$ satisfies
\[\dot{D}(t)=((1-u(t))\tr A_0 + u(t)\tr A_1 )D(t),\qquad D(0)=1\]
which has solution
\[D(t)=\exp\left(\int_0^t (1-u(s))\tr A_0 + u(s)\tr A_1 ds\right)\]
so in particular $\det R(\tau) \in (0,1)$ using the fact that every $(1-\gamma) A_0+\gamma A_1$ is Hurwitz and thus has negative trace.
If $\rho(R(\tau))<1$ then every solution $x(t)\equiv R(t)x(0)$ satisfies 
\[\limsup_{n \to \infty} \threebar{x(n\tau)}^{1/n}\leq \lim_{n \to \infty} \threebar{R(\tau)^n}^{1/n}
=\rho(R(\tau))<1\] using Gelfand's formula, and  it follows easily that $x(t) \to 0$ exponentially. Otherwise $\rho(R(\tau))=1$, and since $0<\det R(\tau)<1$ exactly one of the eigenvalues of $R(\tau)$ has modulus $1$ and both eigenvalues are real. Thus either $1$ or $-1$ must be an eigenvalue for $R(\tau)$, so $R(2\tau)$ has an eigenvalue $1$ with corresponding eigenvector $w$, say. The trajectory $x(t):=R(t)w=R(t+2\tau)w$ is thus periodic with period $2\tau$ and by \eqref{it:unique} this implies that $u$ has the form $u(t)=u_0(t+t_0)$ a.e. as described in \eqref{it:unique}. The proof is complete.

\bibliographystyle{IEEEtran}
\bibliography{periodiq}
\end{document}